\title{Meromorphic continuation of
  the Goldbach generating function}
\author{Gautami Bhowmik and Jan-Christoph Schlage-Puchta}
\subjclass[2000]{11P32, 11P55, 30B40, 11M41}
\keywords{Goldbach numbers, circle method, meromorphic continuation}
\newtheorem{theo}{Theorem}
\newtheorem{lemm}{Lemma}
\newtheorem{conj}{Conjecture}
\newtheorem{Cor}{Corollary}
\newcommand{\R}{\mathbb{R}}
\newcommand{\C}{\mathbb{C}}
\begin{document}
\begin{abstract}
We consider the Dirichlet series associated to the number of representations
of an integer as a sum of  primes. Assuming certain reasonable hypotheses on the distribution of the zeros of the Riemann zeta function we obtain
 the domain of
meromorphic continuation of this series.
\end{abstract}

\maketitle

\section{Introduction and Results}
In this paper we consider the number $G_r(n)$ of
representations of an integer $n$ as the sum of $r$ primes. One possible way to obtain
information is the use of complex integration. To do so Egami and
Matsumoto\cite{EgMat} introduced the generating function
\[
\Phi_r(s) = \sum_{k_1=1}^\infty\dots\sum_{k_r=1}^ \infty 
\frac{\Lambda(k_1)\dots\Lambda(k_r)}{(k_1+k_2+\dots+k_r)^s} 
 = \sum_{n=1}^\infty\frac{G_r(n)}{n^s}.
\]
This series is absolutely convergent for $\Re\;s>r$, and has a simple pole
at $s=r$. 
By Perron's formula we have
\[
\sum_{n\leq x} G_r(n) = \frac{1}{2\pi i}\int_{c-iT}^{c+iT}
\Phi_r(s)\frac{x^s}{s}\;ds + \mathcal{O}(\frac{x^{r+\epsilon}}{T}).
\]
To shift the path of integration to the left, one needs at least
meromorphic continuation to some half-plane $\Re\;s >r-\delta$ as well
as some information on the growth and the distribution of the poles of
$\Phi_r$. Assuming the Riemann hypothesis, Egami and
Matsumoto\cite{EgMat} described the behavior for the case $r=2$. In addition
to the RH, parts of their results depend on unproved assumptions on the
distribution of the imaginary parts of zeros of $\zeta$. 
Denote by $\Gamma$ the set of imaginary parts of non-trivial
zeros of $\zeta$. While the assumption that the positive elements in
$\Gamma$ are rationally independent appears to be folklore,
Fujii\cite{Fu1} drew  attention to the following special case:
\begin{conj}
\label{Con:Indep}
Suppose that $\gamma_1+\gamma_2=\gamma_3+\gamma_4\neq 0$ with
$\gamma_i\in\Gamma$. Then $\{\gamma_1, \gamma_2\}=\{\gamma_3,
\gamma_4\}$.
\end{conj}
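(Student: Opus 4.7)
The plan is to derive the conjecture as a corollary of the substantially stronger folklore hypothesis that the positive ordinates in $\Gamma$ are linearly independent over $\mathbb{Q}$. Assuming that, one starts from a relation $\gamma_1+\gamma_2=\gamma_3+\gamma_4\neq 0$, moves everything to one side, and replaces each $\gamma_i$ by $\varepsilon_i|\gamma_i|$ with $\varepsilon_i\in\{\pm 1\}$, using the symmetry $\Gamma=-\Gamma$. The result is an integer linear relation among positive elements of $\Gamma$; by $\mathbb{Q}$-independence this relation must be trivial, meaning the contributions cancel in pairs, which is precisely the assertion $\{\gamma_1,\gamma_2\}=\{\gamma_3,\gamma_4\}$.

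Since the full $\mathbb{Q}$-independence conjecture is itself unproved and considerably stronger than what is needed, I would next attempt a more targeted attack via the explicit formula. Choose a smooth, even, compactly supported test function $\varphi$ and study the quadruple sum $\sum \varphi(\gamma_1)\varphi(\gamma_2)\varphi(\gamma_3)\varphi(\gamma_4)$ over ordinates satisfying $\gamma_1+\gamma_2=\gamma_3+\gamma_4$. Evaluate it in two ways: spectrally, using RH together with the standard counting function $N(T)$, and arithmetically, turning each factor into a sum over prime powers through the explicit formula and estimating the resulting multidimensional character-like sums. The ``diagonal'' contribution $\{\gamma_1,\gamma_2\}=\{\gamma_3,\gamma_4\}$ yields a main term of predictable size; any non-diagonal coincidence would contribute an extra term that one would try to rule out arithmetically. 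A statistical weakening along these lines, in the spirit of Montgomery's pair-correlation conjecture, may be within reach.

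The principal obstacle, and the reason I do not expect this plan to succeed in its strongest form, is that the conjecture is a purely Diophantine assertion about $\Gamma$ which has no apparent arithmetic reflection. The explicit formula connects sums over zeros to sums over primes, but an ``accidental'' coincidence $\gamma_1+\gamma_2=\gamma_3+\gamma_4$ is not associated with any identity or obstruction on the prime side. Heuristically, GUE statistics predict that such coincidences occur with density zero, yet converting a density or pair-correlation estimate into the \emph{uniform} statement of the conjecture would require a genuinely new ingredient. For this reason I expect that any realistic outcome of this plan is a sparse-exceptional-set theorem rather than the conjecture as stated, which is why Fujii, Egami--Matsumoto, and the present authors are forced to adopt it as a hypothesis.
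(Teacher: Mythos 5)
The statement you were asked to prove is labelled a \emph{conjecture} in the paper, and the paper offers no proof of it: it is Fujii's hypothesis, adopted by Egami--Matsumoto and by the present authors as an unproved assumption, and the paper itself already records the one genuine implication in your first paragraph, namely that it is a special case of the folklore conjecture that the positive elements of $\Gamma$ are linearly independent over $\mathbb{Q}$. Your reduction of the four-term relation to that folklore hypothesis is sound as an implication (writing $\gamma_i=\varepsilon_i|\gamma_i|$ and cancelling coefficients of distinct positive ordinates does force the two pairs to coincide, the case $\gamma_1=-\gamma_2$ being excluded by the condition $\gamma_1+\gamma_2\neq 0$), but it is not a proof: it merely trades the statement for a strictly stronger unproved one, which is exactly the observation the paper makes in the sentence preceding the conjecture.

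The genuine gap is therefore the entire argument. Your second plan --- evaluating a quadruple sum over ordinates via the explicit formula and isolating the diagonal --- is the standard heuristic behind pair- and higher-correlation conjectures, and as you yourself concede it can at best produce a statement that off-diagonal coincidences $\gamma_1+\gamma_2=\gamma_3+\gamma_4$ are rare in a density or almost-all sense; it cannot rule out a single exceptional quadruple, which is what the conjecture asserts. There is no known arithmetic obstruction on the prime side corresponding to such a coincidence, so the explicit formula gives no leverage on the uniform Diophantine statement. Your final paragraph correctly diagnoses this, which means the proposal is an honest assessment of why the statement is open rather than a proof of it; the correct posture toward this statement, matching the paper's, is to assume it as a hypothesis, not to attempt to prove it.
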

Egami and Matsumoto used an effective version of this
conjecture, i.e. 
\begin{conj}
\label{Con:IndepEff}
There is some $\alpha<\frac{\pi}{2}$, such that for $\gamma_1, \ldots,
\gamma_4\in\Gamma$ we have either $\{\gamma_1,
\gamma_2\}=\{\gamma_3,\gamma_4\}$, or 
\[
|(\gamma_1+\gamma_2)-(\gamma_3+\gamma_4)|\geq\exp\big(-\alpha
(|\gamma_1| + |\gamma_2| + |\gamma_3| + |\gamma_4|)\big).
\]
\end{conj}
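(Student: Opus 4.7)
Since Conjecture \ref{Con:IndepEff} is an effective strengthening of the already open Conjecture \ref{Con:Indep}, and no direct attack on either is currently known, my plan would be to reduce it to a single standard diophantine hypothesis on the set $\Gamma$. The natural candidate is a quantitative version of the rational independence of positive ordinates: for any nonzero integer vector $(a_1,\ldots,a_k)$ with $\sum a_i\gamma_i \neq 0$, a bound of the form $|\sum a_i\gamma_i|\geq \exp(-c\sum|a_i|\cdot|\gamma_i|)$ should hold for some absolute $c<\pi/2$. Specializing to $k=4$ and $(a_1,a_2,a_3,a_4)=(1,1,-1,-1)$ then yields the statement directly, provided one first knows that $\sum a_i\gamma_i=0$ forces set equality (which is precisely Conjecture \ref{Con:Indep}).

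So the scheme splits into two stages. First I would prove Conjecture \ref{Con:Indep}, perhaps by exploiting the explicit formula for $\psi(x)$: an identity $\gamma_1+\gamma_2=\gamma_3+\gamma_4$ with $\{\gamma_1,\gamma_2\}\neq\{\gamma_3,\gamma_4\}$ would produce an anomalous resonance in a suitable convolution of $\psi$-type sums, which one would try to contradict using known oscillation bounds on prime-counting functions. Second, I would upgrade the qualitative non-vanishing to a quantitative lower bound, seeking an analogue of Baker's theorem on linear forms in logarithms: the aim is a transcendence-theoretic estimate giving an exponential lower bound on small linear combinations of ordinates, with the required constant $\alpha<\pi/2$ arising from control over the heights in the Baker-type argument.

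The main obstacle is, of course, that no technique is presently available to establish non-trivial arithmetic properties of individual zeros of $\zeta$. Even the irrationality of a single $\gamma\in\Gamma$ is open, so any argument that forbids a four-term linear relation must somehow bypass this. Transcendence methods in the tradition of Baker and Gelfond--Schneider apply to numbers with explicit algebraic or exponential origin, not to analytically defined spectral data like zeta ordinates; conversely, the classical analytic tools (explicit formula, zero-density estimates, pair correlation) furnish information about the \emph{average} behaviour of $\Gamma$ but nothing quantitative about separations of individual sums $\gamma_i+\gamma_j$. This is presumably why the authors are content to adopt Conjecture \ref{Con:IndepEff} as a working hypothesis: a proof appears to lie well beyond the reach of present analytic number theory, and the most one can realistically hope for in the short term is a conditional derivation from a yet stronger but more structural diophantine conjecture on $\Gamma$.
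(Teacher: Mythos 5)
The statement you were asked about is a \emph{conjecture}, not a theorem: the paper offers no proof of Conjecture~\ref{Con:IndepEff} and does not claim one. It is introduced as the effective strengthening (due to Egami and Matsumoto) of Fujii's Conjecture~\ref{Con:Indep}, and the only thing the paper establishes about it is the trivial remark that Conjecture~\ref{Con:IndepEff} implies Conjecture~\ref{Con:Indep}; everywhere else it appears only as a hypothesis in the statements of Theorem~1 and of parts of Theorem~\ref{thm:Boundary}. So there is no proof in the paper to compare yours against.

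Your proposal is, correctly, not a proof either: it is a two-stage plan (first establish the qualitative independence of Conjecture~\ref{Con:Indep}, then upgrade it to an exponential separation via a Baker-type transcendence argument), together with an accurate admission that neither stage is within reach. That assessment matches the paper's implicit position. Two small caveats on the speculative content: your proposed ``quantitative rational independence'' hypothesis with the specialization $(1,1,-1,-1)$ is indeed a clean way to package the conjecture, but note that the constant $\alpha<\frac{\pi}{2}$ is not arbitrary --- it is calibrated so that the factor $\Gamma(\rho)\sim e^{-\frac{\pi}{2}|\gamma|}$ appearing in the residue computation of Lemma~\ref{Lem:Phi2Rep} dominates the near-resonance, so any Baker-type bound would have to beat that specific exponential rate, which is a much stronger demand than mere effectivity. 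Also, the paper's part~(2) of Theorem~\ref{thm:Boundary} shows that for the natural-boundary conclusion one can get away with the weaker, purely qualitative Conjecture~\ref{Con:Indep} restricted to pairs with $|\gamma_1|+|\gamma_2|\le 5(|\gamma_3|+|\gamma_4|)$, a finite check for each fixed target point; if your goal is the applications rather than the conjecture itself, that is the route the paper actually takes.
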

Obviously, Conjecture~\ref{Con:IndepEff} implies
Conjecture~\ref{Con:Indep}. In  \cite{EgMat}  it is proven that:
\begin{theo}
Suppose the Riemann hypothesis holds true. Then $\Phi_2(s)$ can be
meromorphically continued into the half-plane $\Re s>1$ with an
infinitude of poles on the line $\frac{3}{2}+it$. If in
addition Conjecture~\ref{Con:IndepEff} holds true, then the line
$\Re\;s=1$ is the natural boundary of $\Phi_2$. More precisely, the
set of points $1+i\kappa$ with $\lim_{\sigma\searrow 1}
|\Phi_2(\sigma+\kappa)| =\infty$ is dense on $\R$.
\end{theo}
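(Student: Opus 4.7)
The plan is to transform $\Phi_2$ into a Mellin integral: for $\Re s>2$, writing
\[
\frac{1}{(k_1+k_2)^s} \;=\; \frac{1}{\Gamma(s)}\int_0^\infty t^{s-1} e^{-(k_1+k_2)t}\,dt
\]
and swapping sum with integral gives, with $P(t)=\sum_{k\ge 1}\Lambda(k)e^{-kt}$,
\[
\Phi_2(s) \;=\; \frac{1}{\Gamma(s)}\int_0^\infty t^{s-1} P(t)^2\, dt.
\]
Everything therefore reduces to understanding $P(t)$ as $t\to 0^+$. Since the Mellin transform of $P$ is $-\Gamma(s)\zeta'(s)/\zeta(s)$, shifting the Mellin-inversion contour to the left and invoking RH (so that the non-trivial zeros $\rho=\tfrac12+i\gamma$ are simple) yields the expansion
\[
P(t) \;=\; \frac{1}{t} \;-\; \sum_\rho \Gamma(\rho)\,t^{-\rho} \;+\; R(t),
\]
where $R(t)$ collects the contributions from $s=0$ and the trivial zeros and is smooth near $0$.

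I would then square $P(t)$ and substitute back. Splitting $\int_0^\infty=\int_0^1+\int_1^\infty$ (the tail being entire in $s$) and integrating term by term on $(0,1]$ via $\int_0^1 t^{s-a-1}\,dt=1/(s-a)$ produces a simple pole at $s=2$ from $t^{-2}$; a family of simple poles at $s=\rho+1$ with residues proportional to $\Gamma(\rho)$, on the line $\Re s=\tfrac32$; and simple poles at $s=\rho_1+\rho_2$ with residues proportional to $\Gamma(\rho_1)\Gamma(\rho_2)$, on the line $\Re s=1$. Absolute convergence of both the single and the double series follows from Stirling's estimate $|\Gamma(\tfrac12+i\gamma)|\ll e^{-\pi|\gamma|/2}$, which easily dominates the $\log T$ density of zeros. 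This proves meromorphic continuation to $\Re s>1$ together with the infinitude of poles on $\Re s=\tfrac32$, since distinct $\rho$ yield distinct poles $\rho+1$ with nonzero residues.

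For the natural-boundary assertion I would bring in Conjecture~\ref{Con:IndepEff}. Conjecture~\ref{Con:Indep} first ensures that for each $\kappa=\gamma_1+\gamma_2\neq 0$ only a single unordered pair contributes to the pole at $s=1+i\kappa$, so that its residue $\Gamma(\rho_1)\Gamma(\rho_2)/\Gamma(1+i\kappa)$ is genuinely nonzero. Next, the Riemann--von Mangoldt formula gives $\gamma_{n+1}-\gamma_n\to 0$, from which, by picking $\gamma_1,\gamma_2$ of opposite sign and large modulus, one sees that $\{\gamma_1+\gamma_2:\gamma_i\in\Gamma\}$ is dense in $\R$. Hence the poles of $\Phi_2$ lie densely on $\Re s=1$. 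Finally, the effective form of the conjecture converts density of poles into density of blow-up points: the spacing bound $\exp(-\alpha(|\gamma_1|+\cdots+|\gamma_4|))$, combined with the decay of the residues, isolates each pole from its neighbours on a scale sufficient to force $|\Phi_2(\sigma+i\kappa)|\to\infty$ as $\sigma\searrow 1$.

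The key difficulty is precisely this last quantitative step. One must verify that, near a given pole $1+i\kappa$, the aggregate contribution of all other terms in the double sum remains bounded as $\sigma\searrow 1$, so that the isolated residue wins. The restriction $\alpha<\pi/2$ in Conjecture~\ref{Con:IndepEff} is tailored so that the exponentially small spacing between nearby poles is still large compared with the exponential $\Gamma$-decay of the residues; without this margin, accidental clustering of singularities could in principle mask the blow-up.
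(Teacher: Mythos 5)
Your proposal is essentially the approach of Egami and Matsumoto, which is what this paper cites for the theorem rather than reproving it: your Laplace-transform expansion of $P(t)^2$ yields exactly the double-sum representation that appears here as Lemma~\ref{Lem:Phi2Rep}, with the identical pole structure at $s=2$, at $s=1+\rho$ (giving the poles on $\Re s=\tfrac32$ with residues proportional to $1/\rho$), and at $s=\rho+\rho'$ on $\Re s=1$, and with the same interplay between the $\Gamma$-decay of the residues and the spacing bound of Conjecture~\ref{Con:IndepEff}. The outline is correct; the one step you leave as a sketch --- showing that the aggregate contribution of all other poles stays bounded as $\sigma\searrow 1$ because $\alpha<\pi/2$ --- is precisely the computation the paper itself carries out (for Theorem~\ref{thm:Boundary}, parts (2) and (3)) by splitting the double sum according to $|\rho+\rho'-\rho_1-\rho_2|$ and $|\rho|+|\rho'|$, and it goes through as you describe.
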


The above mentioned authors conjectured that under the same
assumptions the domain of meromorphic continuation of $\Phi_r$ should
be the half-plane $\Re s>r-1$. In this direction we show that

\begin{theo}
\label{thm:Boundary}
Let $\mathcal{D}_r\subseteq\C$ be the domain of meromorphic continuation of
$\Phi_r(s)$.
\begin{enumerate}
\item If the RH holds true, then $\Phi_r(s)$ has a natural boundary at
  $\Re s=r-1$ for all $r\geq 2$ if and only if $\Phi_2(s)$ has a
  natural boundary at $\Re s=1$.
\item If the RH and Conjecture~\ref{Con:Indep} hold true, then
  $\Phi_2(s)$ has a natural boundary at $\Re s=1$.
\item If the RH holds true, then $\Phi_2$ has a singularity at $2\rho_1$, where
$\rho_1=\frac{1}{2}+14.1347\ldots i$ is the first root of $\zeta$. Moreover,
\begin{equation}
\label{eq:pole}
\lim_{\sigma_\searrow 0} (\sigma-1)|\phi_2(2\rho_1+\sigma)|>0.
\end{equation}
\item If Conjecture~\ref{Con:IndepEff} holds true, then
$\{s:\Re s>2\sigma_0\}\subseteq\mathcal{D}_2\subseteq\{s:\Re s>1\}$, where
$\sigma_0$ is the infimum over all real numbers $\sigma$, such that
$\zeta$ has only finitely many zeros in the half-plane $\Re
s>\sigma_0$.
\end{enumerate}
\end{theo}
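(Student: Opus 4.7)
The plan is to start from the Mellin representation
\[
\Phi_r(s) \;=\; \frac{1}{\Gamma(s)}\int_0^\infty \psi(t)^r\,t^{s-1}\,dt,
\qquad
\psi(t):=\sum_{n\geq 1}\Lambda(n)e^{-nt},
\]
obtained by inverting $\Gamma(s)/n^s=\int_0^\infty e^{-nt}t^{s-1}dt$ and summing. From the Mellin pair of $\psi$ with $-\zeta'/\zeta$, contour-shifting past the nontrivial zeros gives, for small $t$,
\[
\psi(t)\;=\;\frac{1}{t}\;-\;\sum_{\rho}\Gamma(\rho)\,t^{-\rho}\;+\;E(t),
\]
where $\rho$ runs over the nontrivial zeros of $\zeta$ and $E$ is holomorphic near $0$. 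I would split $\int_0^\infty=\int_0^1+\int_1^\infty$, the latter being entire in $s$; the former I would expand multinomially and integrate termwise using $\int_0^1 t^{s-1-w}dt=(s-w)^{-1}$, producing candidate meromorphic terms with poles at every
\[
s\;=\;(r-k)+\rho_1+\cdots+\rho_k,\qquad 1\le k\le r,
\]
with residues (up to $\Gamma(s)^{-1}$) equal to $\binom{r}{k}\prod_j(-\Gamma(\rho_j))$.

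For parts (1) and (3) I would observe that, under RH, the only candidate poles on the line $\Re s=r-1$ come from $k=2$, sitting at $s=r-1+i(\gamma_1+\gamma_2)$ with residues identical to those of $\Phi_2$ at $1+i(\gamma_1+\gamma_2)$ up to the nonvanishing factor $\binom{r}{2}\Gamma(s-(r-2))/\Gamma(s)$. Writing $\Phi_r$ locally near $\Re s=r-1$ as this ``principal part'' plus a function holomorphic on a strip, (1) reduces to the corresponding assertion for $\Phi_2$. For (3), the pair $(\rho_1,\rho_1)$ produces a pole of $\Phi_2$ at $2\rho_1$; since $\gamma_1\approx 14.1347$ is the smallest positive ordinate, no other ordered zero-pair sums to $2\rho_1$, the residue $\Gamma(\rho_1)^2/\Gamma(2\rho_1)$ is nonzero, and the asserted growth \eqref{eq:pole} follows.

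For part (2), Conjecture~\ref{Con:Indep} says that $\{\gamma_1,\gamma_2\}$ is determined by $\gamma_1+\gamma_2\neq 0$; hence the residues at the various candidate poles on $\Re s=1$ cannot cancel across distinct pairs. Since the ordinates $\gamma$ have differences dense in $\R$ (indeed the set of sums $\gamma_1+\gamma_2$ is dense), $\Phi_2$ acquires a dense set of honest simple poles on $\Re s=1$, which is then a natural boundary. The pointwise-blowup refinement follows by isolating a single pole term and bounding the sum of the others uniformly, which is possible because the residues decay exponentially in $|\gamma_1|+|\gamma_2|$.

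For part (4), the upper inclusion $\mathcal{D}_2\subseteq\{\Re s>1\}$ is immediate from (2) since Conjecture~\ref{Con:IndepEff} implies Conjecture~\ref{Con:Indep}. The lower inclusion is the crux and the \emph{main obstacle}. Formally, if only finitely many zeros have $\Re\rho>\sigma_0$, then apart from finitely many exceptions all candidate poles $\rho_1+\rho_2$ lie in $\Re s\le 2\sigma_0$; what remains is to prove that the double series
\[
\sum_{\rho_1,\rho_2}\frac{\Gamma(\rho_1)\Gamma(\rho_2)}{s-\rho_1-\rho_2}
\]
represents a meromorphic function on $\{\Re s>2\sigma_0\}$. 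The numerators decay like $e^{-\pi(|\gamma_1|+|\gamma_2|)/2}$ by Stirling, but the denominators may a priori become arbitrarily small as pole locations cluster; Conjecture~\ref{Con:IndepEff} is tuned precisely to defeat this, as the separation $e^{-\alpha(|\gamma_1|+\cdots+|\gamma_4|)}$ with $\alpha<\pi/2$ is dominated by the Stirling decay. Reordering a $T$-truncated partial sum by proximity of pole locations and invoking $N(T)\ll T\log T$ would then yield absolute convergence away from the poles, completing the meromorphic continuation to $\{\Re s>2\sigma_0\}$.
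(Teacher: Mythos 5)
Your Mellin-transform route through $\psi(t)=\sum\Lambda(n)e^{-nt}$ and the explicit formula is a genuinely different (and plausible) way to get the structural reduction of part (1): the paper instead proves a representation theorem for $\Phi_r$ by the circle method (its Theorem~\ref{thm:Rep}), which it advertises as the main novelty; your $k$-indexed expansion reaches essentially the same conclusion that on $\Re s=r-1$ only the ``two-zero'' terms matter, and for parts (2)--(3) your double sum $\sum_{\rho,\rho'}\Gamma(\rho)\Gamma(\rho')/(s-\rho-\rho')$ is, up to normalization, the Egami--Matsumoto representation the paper also uses. However, there are genuine gaps in the harder parts. In part (3), the claim that no other zero-pair sums to $2\rho_1$ ``since $\gamma_1$ is the smallest positive ordinate'' is a wrong justification: the sum over nontrivial zeros includes the conjugates $\tfrac12-i\gamma$, so for \emph{every} ordinate $\gamma$ the pair $(\tfrac12-i\gamma,\tfrac12+i\gamma')$ with $\gamma'=2\gamma_1+\gamma$ is a candidate, and there are infinitely many such pairs to exclude. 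The paper handles this by first proving a quantitative threshold --- pairs with $|\rho|+|\rho'|>5(|\rho_1|+|\rho_2|)$ contribute at most $270\,\eta^{-1}e^{-\frac{12}{5}(|\rho_1|+|\rho_2|)}$, which is beaten by the main pair's $2\,\eta^{-1}|\Gamma(\rho_1)\Gamma(\rho_2)|$ because $|\rho_1|+|\rho_2|>28$ --- and then checking the finitely many remaining pairs (the $39$ zeros with ordinate up to $142$) numerically. Without such a threshold your finite verification never terminates. The same quantitative competition is missing from your treatment of the pointwise blow-up in part (2): since the candidate pole locations are dense on $\Re s=1$, the ``other'' terms are \emph{not} holomorphic near the target point, and each nearby term is itself of size $\asymp 1/\eta$ on the ray $\rho_1+\rho_2+\eta$; ``the residues decay exponentially'' only closes the argument after one shows the aggregate residue mass of all nearby pairs is strictly smaller than that of the distinguished pair.

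The most serious gap is in part (4): you derive the upper inclusion $\mathcal{D}_2\subseteq\{\Re s>1\}$ from part (2), but part (2) assumes RH while part (4) does not, so the deduction is unavailable. The paper proves instead an unconditional geometric lemma: using Ingham's zero-density estimate $N(T,\sigma)\ll T^{3(1-\sigma)/(2-\sigma)}\log^5T$ and Littlewood's theorem that gaps between consecutive ordinates tend to zero, it shows that the closure of $\{\rho+\rho':\zeta(\rho_i)=0\}$ meets every $\epsilon$-square abutting the line $\Re s=1$ from the right; combined with the Egami--Matsumoto fact that under Conjecture~\ref{Con:IndepEff} every such sum is a singularity of $\Phi_2$, this pins the component of meromorphy containing $\Re s>2$ inside $\{\Re s>1\}$. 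You would need to supply this (or an equivalent unconditional density argument) to make part (4) go through. Your sketch of the lower inclusion, by contrast, correctly identifies the role of the separation hypothesis $\alpha<\pi/2$ against Stirling decay and matches the paper's appeal to Egami--Matsumoto.
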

The existence of a natural boundary already implies an $\Omega$-theorem
(confer \cite{natbd}). Here we can do a little better because of
(\ref{eq:pole}). We set
$$H_r(x)=-r\sum_{\rho}\frac{x^{r-1+\rho}}{\rho(1+\rho)\dots (r-1+\rho)}$$
 where the summation  
runs over all non-trivial zeros of $\zeta$ and we obtain the following
corollary.

\begin{Cor}
Suppose that RH holds true. Then we have
\[
\sum_{n\leq x} G_r(n) = \frac{1}{r!}x^r + H_r(x) +
\Omega(x^{r-1}).
\]
\end{Cor}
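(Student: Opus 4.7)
The plan is to apply Perron's formula to $\Phi_r(s)$, shift the contour to just right of $\Re s=r-1$ so that the collected residues produce the stated main term and $H_r(x)$, and then extract the $\Omega$-bound from a simple pole of $\Phi_r$ lying on the line $\Re s=r-1$.

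I would work from the Mellin representation
\[
\Phi_r(s)\,\Gamma(s)=\int_0^\infty f(t)^r\,t^{s-1}\,dt,\qquad f(t)=\sum_{k\ge 1}\Lambda(k)\,e^{-kt},
\]
together with the RH-conditional expansion $f(t)=\tfrac{1}{t}-\sum_\rho\Gamma(\rho)\,t^{-\rho}+O(1)$ as $t\to 0^+$. Expanding $f(t)^r$ and performing a term-by-term Mellin analysis locates the poles of $\Phi_r$ in $\Re s>r-1$: a simple pole of residue $1/(r-1)!$ at $s=r$, and simple poles of residue $-r\,\Gamma(\rho)/\Gamma(r-1+\rho)$ at each $s=r-1+\rho$. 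Using the identity $\Gamma(\rho)/\Gamma(r-1+\rho)=1/\bigl(\rho(1+\rho)\cdots(r-2+\rho)\bigr)$, shifting the Perron contour past these poles contributes precisely $x^r/r!+H_r(x)$.

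The same expansion produces a term $\binom{r}{2}\,t^{-(r-2)}\bigl(\Gamma(\rho_1)\,t^{-\rho_1}\bigr)^2$, and hence a simple pole of $\Phi_r$ at $s_0:=(r-2)+2\rho_1$ satisfying $\Re s_0=r-1$. For $r=2$ this is precisely part~(3) of Theorem~\ref{thm:Boundary}, and the argument there---that the diagonal pair $(\rho_1,\rho_1)$ dominates among pairs $(\rho',\rho'')$ of zeros with $\rho'+\rho''=2\rho_1$---carries over to $r\ge 3$ to yield $\liminf_{\sigma\searrow 0}\sigma\,|\Phi_r(s_0+\sigma)|>0$.

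Finally, suppose for contradiction that $E(x):=\sum_{n\le x}G_r(n)-x^r/r!-H_r(x)=o(x^{r-1})$. Partial summation gives $\int_1^\infty E(x)\,x^{-s-1}\,dx=\Phi_r(s)/s$ minus the explicit singular contributions at $s=r$ and $s=r-1+\rho$, and a standard Landau-type bound shows that $o(x^{r-1})$ forces $\sigma\,|\Phi_r(s_0+\sigma)|\to 0$ as $\sigma\searrow 0$, contradicting the pole at $s_0$; hence $E(x)=\Omega(x^{r-1})$. The principal obstacle is the quantitative transfer of~\eqref{eq:pole} from $r=2$ to general $r$: one must verify that the diagonal contribution at $s_0$ is genuinely not cancelled by off-diagonal pairs of zeros summing to $2\rho_1$, ensuring the simple-pole lower bound survives in $\Phi_r$.
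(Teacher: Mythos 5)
Your final contradiction step is exactly the paper's: set $\Delta_r(x)=\sum_{n\le x}G_r(n)-x^r/r!-H_r(x)$, note that $\Delta_r(x)=o(x^{r-1})$ would force $\sigma\,|D(\sigma+2\rho_1+r-1)|\to 0$ for the associated Dirichlet series (a Landau-type bound, since $\Re(2\rho_1+r-1)=r$), and contradict the singularity guaranteed by~(\ref{eq:pole}). Note that for this you do not need to justify any contour shift or prove that $x^r/r!+H_r(x)$ is a genuine asymptotic main term (that is Theorem~\ref{thm:ErrorG2}, quoted from \cite{Nagoya}); the $\Omega$-statement only requires the pole on $\Re s=r-1$ plus the trivial upper bound, so the Perron discussion in your first two paragraphs is dispensable.

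Where you genuinely diverge from the paper is in how you produce that pole of $\Phi_r$ for $r\ge 3$, and this is where your argument has a real gap --- one you correctly flag yourself. Expanding $f(t)^r$ and reading off a ``simple pole'' at $s_0=r-2+2\rho_1$ from the term $\binom{r}{2}t^{-(r-2)}\bigl(\sum_\rho\Gamma(\rho)t^{-\rho}\bigr)^2$ is not legitimate as stated: the candidate singularities $r-2+\rho+\rho'$ are dense on the line $\Re s=r-1$, so one must run the full domination argument of Section~3 (splitting pairs $\rho,\rho'$ by $|\rho+\rho'-2\rho_1|$ and by $|\rho|+|\rho'|$, using Backlund's bound and the decay of $\Gamma$, plus the numerical check that no off-diagonal pair of low-lying zeros sums to $2\gamma_1$) with $r$-dependent weights $\Gamma(\rho)\Gamma(\rho')/\Gamma(r-2+\rho+\rho')$, and you do not carry this out. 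The paper sidesteps this entirely: Theorem~\ref{thm:Rep} writes $\Phi_r(s)=f_{4,r}(s)\Phi_2(s-r+2)+(\text{terms regular at }s_0)$, so the $r=2$ singularity established in Theorem~\ref{thm:Boundary}(3) transfers to $\Phi_r$ at $s_0$ with no new analysis, and Lemma~\ref{Lem:DPartial} then shifts it to $2\rho_1+r-1$ for the summatory series. If you want to keep your direct Mellin route you must close the non-cancellation step for every $r$; the shorter path is to invoke Theorem~\ref{thm:Rep} (checking only that $f_{4,r}(s_0)\neq 0$) and reduce to the case $r=2$ already proved.
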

We note that without (\ref{eq:pole}) the omega term would still be
$\Omega(x^{r-1-\epsilon})$.
The corresponding $\mathcal{O}$-result  for $r=2$ is \cite{Nagoya} 
\begin{theo}
\label{thm:ErrorG2}
Suppose that the RH is true. Then
we have 
\[
\sum_{n\leq x} G_2(n) = \frac{1}{2}x^{2} + H_2(x) +
\mathcal{O}(x\log^5 x).
\]
\end{theo}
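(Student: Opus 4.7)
The plan is to expand
\[
R(x) := \sum_{n\leq x}G_2(n) = \sum_{a\leq x}\Lambda(a)\,\psi(x-a)
\]
and use the explicit formula for $\psi$ twice. Writing $\psi(y)=y+E(y)$ with $E(y):=\psi(y)-y$, the first-order piece $\sum_{a\leq x}\Lambda(a)(x-a)=\int_0^x\psi(t)\,dt$ is handled by one integration of the explicit formula: under RH the series $-\sum_\rho x^{\rho+1}/(\rho(\rho+1))$ converges absolutely, yielding $\tfrac{1}{2}x^2+\tfrac{1}{2}H_2(x)+O(x)$. It therefore remains to show
\[
S(x) := \sum_{a\leq x}\Lambda(a)\,E(x-a) = \tfrac{1}{2}H_2(x) + O(x\log^5 x).
\]

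For $S(x)$ I would substitute the truncated explicit formula $E(x-a)=-\sum_{|\gamma|\leq T}(x-a)^{\rho}/\rho+O((x-a)\log^2(xT)/T+\log x)$ and interchange summation. Abel summation gives $\sum_{a\leq x}\Lambda(a)(x-a)^{\rho}=\rho\int_0^x(x-t)^{\rho-1}\psi(t)\,dt$; splitting $\psi(t)=t+E(t)$ inside the integral, the $t$-part integrates to $x^{\rho+1}/(\rho+1)$, which, after division by $-\rho$ and summation over $|\gamma|\leq T$ (tail $O(x^{3/2}\log T/T)$), supplies the missing $\tfrac{1}{2}H_2(x)$. Truncation of the outer explicit formula contributes an error $O(x^2\log^2(xT)/T)$.

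The main difficulty is the bilinear remainder
\[
\mathcal{C}(x) := \sum_{|\gamma|\leq T}\int_0^x (x-t)^{\rho-1}E(t)\,dt.
\]
Expanding $E(t)$ with the explicit formula a second time exhibits $\mathcal{C}$ as a double zero sum with weights $\Gamma(\rho)\Gamma(\rho')/\Gamma(\rho+\rho'+1)$ and factor $x^{\rho+\rho'}$; by Stirling these decay exponentially for pairs of opposite sign but only as $(|\gamma|+|\gamma'|)^{-3/2}$ for same-sign pairs, so the sum is logarithmically divergent in absolute value and genuine cancellation must be extracted. My plan is to bound $\mathcal{C}(x)$ by Cauchy--Schwarz in $\gamma$, controlling the resulting diagonal kernel $\sum_{|\gamma|\leq T}e^{i\gamma\log((x-t)/(x-s))}$ via the Landau-type estimate $\min(N(T),\log T/|u|)$, and using Cram\'er's RH second-moment bound $\int_0^x E(t)^2\,dt\ll x^2$ after a dyadic split of $[0,x]$ to handle the integrable singularity at $t=x$. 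Choosing $T$ as a modest power of $x$ so as to absorb the $O(x^2\log^2(xT)/T)$ truncation error should then yield the desired $O(x\log^5 x)$. This final Cauchy--Schwarz/second-moment balance, and the precise accounting of logarithmic factors it forces, is the step I expect to require the most work.
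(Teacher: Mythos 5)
The first thing to note is that the paper does not prove Theorem~\ref{thm:ErrorG2} here at all: it is quoted from \cite{Nagoya}, so your attempt must be judged on its own merits. Your reduction is correct as far as it goes --- the splitting $\sum_{n\le x}G_2(n)=\sum_{a\le x}\Lambda(a)\psi(x-a)$, the identification of $\int_0^x\psi(t)\,dt$ with $\tfrac12x^2+\tfrac12H_2(x)+O(x)$, the Beta-integral computation supplying the other half of $H_2(x)$, and the isolation of the bilinear term $\mathcal{C}(x)$ are all standard (this is essentially Fujii's and Egami--Matsumoto's setup), and you have correctly located the crux. The gap is in the last step, and it is structural rather than technical: Cauchy--Schwarz against Cram\'er's bound $\int_0^xE(t)^2\,dt\ll x^2$ cannot reach $O(x\log^5x)$. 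However you arrange the Cauchy--Schwarz, the diagonal of the resulting kernel forces a bound of the shape $|\mathcal{C}(x)|\ll x\sqrt{T}\log^{O(1)}x$; for instance the dyadic block with $x-t\asymp x$ already contributes $\bigl(\int|K|^2\bigr)^{1/2}\bigl(\int E^2\bigr)^{1/2}\ll(T\log^3T)^{1/2}\cdot x$, and since $\int_0^xE(t)^2\,dt$ really is of order $x^2$ under RH this loss cannot be removed by sharper estimation. Balancing $x\sqrt{T}$ against the truncation error $x^2\log^2(xT)/T$ forces $T\asymp x^{2/3}$ and yields $O(x^{4/3+\epsilon})$: you recover Fujii's theorem, not the stated bound. (A secondary problem: the pointwise bound $\sum_{|\gamma|\le T}e^{i\gamma u}\ll\min(N(T),\log T/|u|)$ you invoke is false --- the fluctuating part of $N(t)$ contributes up to $|u|T\log T$ --- and only mean-value versions of it over $u$ are available.)

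What is missing is an input that detects cancellation of $E$ over \emph{short} ranges rather than its global mean square. The proof in \cite{Nagoya} runs through the circle method: writing $\sum_{n\le x}G_2(n)=\int_0^1S(\alpha)^2\overline{T(\alpha)}\,d\alpha$ and expanding $S=T+R$, the main terms $\tfrac12x^2$ and $H_2(x)$ come from $\int T^2\overline{T}$ and $\int TR\overline{T}$, and the critical error term $\int_0^1|R(\alpha)|^2|T(\alpha)|\,d\alpha$ is controlled by combining $|T(\alpha)|\ll\min(x,\|\alpha\|^{-1})$ with a localized second-moment bound of the type $\int_{-\delta}^{\delta}|R(\alpha)|^2\,d\alpha\ll\delta x\log^2x$ for $\delta\ge 1/x$. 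The latter follows from Gallagher's lemma together with Selberg's conditional estimate
\[
\int_1^x\bigl(\psi(t+h)-\psi(t)-h\bigr)^2\,dt\ll hx\log^2x ,
\]
and this short-interval mean value is precisely the saving your plan lacks. Without it (or an equivalent, such as the Languasco--Zaccagnini bound on $\int|\sum_\rho\Gamma(\rho)z^{-\rho}|^2\,d\alpha$), the same-sign double zero sum that you correctly identified as the obstruction cannot be pushed below the $x^{4/3}$ barrier.
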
 
It extends easily to $r\geq 3$. 
One might expect that
the quality of the error term would improve with $r$ increasing,
however this is not the case.

 Part (1) of
Theorem~\ref {thm:Boundary} follows immediately from the  assertion that the analytic behavior of
$\Phi_r$ is completely determined by the behavior of $\Phi_2$. More precisely

\begin{theo}
\label{thm:Rep}
Suppose the RH. Then for any $r\geq 3$ there exist rational functions
$f_{1,r}, \ldots, f_{4,r}(s)$, such that
\begin{multline*}
\Phi_r(s) = f_{1,r}(s)\zeta(s-r+1) + f_{2,r}(s)\zeta(s-r+2)\\
+f_{3,r}(s)\frac{\zeta'}{\zeta}(s-r+1) + f_{4,r}(s)\Phi_2(s-r+2) + R(s),
\end{multline*}
where $R(s)$ is holomorphic in the half-plane $\Re s>r-1-1/10$ and
uniformly bounded in each half-strip of the form $\Re s>r-1-1/10+\epsilon$, $T<\Im
s<T+1$, with $T>0$.
\end{theo}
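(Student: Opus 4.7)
The plan is to start from the Mellin representation $\Gamma(s)\Phi_r(s) = \int_0^\infty t^{s-1}\psi(t)^r\,dt$ with $\psi(t) = \sum_n \Lambda(n)e^{-nt}$, split the integral at $t=1$, and separate out the arithmetic content carried by small $t$. The tail $\int_1^\infty$ is entire and bounded in every half-strip since $\psi$ decays exponentially. On $(0,1]$, shifting the inverse-Mellin contour of $-\Gamma(w)\zeta'/\zeta(w)$ past the pole $w=1$ and the nontrivial zeros produces
\[
\psi(t) = \frac{1}{t} + u(t), \qquad u(t) = -\sum_\rho \Gamma(\rho)t^{-\rho} + A(t),
\]
where $A(t)$ is smooth and bounded on $(0,1]$. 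Under RH, Stirling gives $|\Gamma(1/2+i\gamma)| \ll e^{-\pi|\gamma|/2}$, the series over $\rho$ is absolutely convergent, and $|u(t)| \ll t^{-1/2}$ throughout $(0,1]$.

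Next I would expand $(1/t+u)^r$ binomially and treat the resulting integrals. For $k\in\{0,1,2\}$ the integral $\int_0^1 t^{s-1-r+k}u^k\,dt$ can be reduced, by substituting $u = \psi - 1/t$ and invoking $\int_0^\infty t^{w-1}\psi\,dt = -\Gamma(w)\zeta'/\zeta(w)$ and $\int_0^\infty t^{w-1}\psi^2\,dt = \Gamma(w)\Phi_2(w)$, to an explicit combination of $1/(s-r)$, $\Gamma(s-r+1)\zeta'/\zeta(s-r+1)$ and $\Gamma(s-r+2)\Phi_2(s-r+2)$. Collecting coefficients yields
\[
\Gamma(s)\Phi_r(s) = \frac{\binom{r-1}{2}}{s-r} + r(r-2)\,\Gamma(s-r+1)\frac{\zeta'}{\zeta}(s-r+1) + \binom{r}{2}\,\Gamma(s-r+2)\Phi_2(s-r+2) + E(s),
\]
modulo the $k \geq 3$ contributions. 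For these, the crude bound $|u|^k \leq C^k t^{-k/2}$ shows that $\int_0^1 t^{s-1-r+k} u^k\,dt$ is holomorphic, and uniformly bounded on each half-strip, in $\Re s > r - k/2$; since $k \geq 3$ this certainly contains the target half-plane $\Re s > r - 1 - 1/10$.

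Dividing by $\Gamma(s)$, the rational identity $\Gamma(s-r+j)/\Gamma(s) = 1/[(s-1)(s-2)\cdots(s-r+j)]$ produces the rational coefficients $f_{3,r}(s) = r(r-2)/[(s-1)\cdots(s-r+1)]$ and $f_{4,r}(s) = \binom{r}{2}/[(s-1)\cdots(s-r+2)]$. The isolated term $\binom{r-1}{2}/[(s-r)\Gamma(s)]$ has a simple pole at $s=r$ with residue $\binom{r-1}{2}/\Gamma(r)$; taking $f_{1,r}(s) = \binom{r-1}{2}/\Gamma(r)$ (a rational constant) cancels that pole against $f_{1,r}(s)\zeta(s-r+1)$ and leaves an entire remainder whose exponential growth $|1/\Gamma(s)| \sim e^{\pi|t|/2}$ is harmless on any fixed half-strip. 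A residue check at $s = r-1$ -- using that, applied with $r=2$, the same method supplies $\mathrm{Res}_{s=1}\Phi_2 = -2\log 2\pi$ -- shows that the poles of $f_{3,r}\zeta'/\zeta(s-r+1)$ and $f_{4,r}\Phi_2(s-r+2)$ at $s=r-1$ already sum to the residue of $\Phi_r$ there, so one may take $f_{2,r} = 0$.

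The principal obstacle is bookkeeping: one must confirm that the residues produced on the right at every singular abscissa of $\Phi_r$ in $\Re s > r-1-1/10$ -- namely $s=r$, $s=r-1$, and the poles on $\Re s = r - 1/2$ arising from the zeros $\rho$, which appear in both $\zeta'/\zeta(s-r+1)$ and, via $\Phi_2$, in $\Phi_2(s-r+2)$ -- agree with those of $\Phi_r$ itself, so that the difference $R(s)$ is genuinely holomorphic there rather than merely meromorphic. Under RH each such identity is a finite algebraic check, and global consistency is enforced by the very choice of the coefficients $\binom{r-1}{2}$, $r(r-2)$ and $\binom{r}{2}$ emerging from the binomial expansion of $\psi^r$.
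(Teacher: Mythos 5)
Your route is genuinely different from the paper's: you work with the Laplace--Mellin identity $\Gamma(s)\Phi_r(s)=\int_0^\infty t^{s-1}\psi(t)^r\,dt$ and the explicit formula for $\psi$, whereas the paper applies the circle method to the smoothed counting function $A_r^3(x)=\int_0^1S(\alpha)^rT_3(\alpha)\,d\alpha$, expands $S=T+R$, and then un-smooths via partial summation (Lemma~\ref{Lem:DPartial}). Your identification of the main terms is correct: expanding $(1/t+u)^r$ with $u=\psi-1/t\ll t^{-1/2}$ (under RH) does yield the coefficients $\binom{r-1}{2}$, $r(r-2)$, $\binom{r}{2}$, the $k\ge3$ integrals are holomorphic for $\Re s>r-k/2$, and this gives meromorphic continuation modulo $\Phi_2(s-r+2)$ even to $\Re s>r-3/2$, better than the stated $r-1-1/10$. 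Since your identity is exact, $f_{2,r}=0$ needs no ``residue check''; the check you sketch is in any case illegitimate, as it invokes a residue of $\Phi_2$ at the point $1$, which lies on the conjectural natural boundary of $\Phi_2$ and where $\Phi_2$ is not known to exist.

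The genuine gap is the uniform boundedness of $R(s)$ on the half-strips $T<\Im s<T+1$ \emph{uniformly in} $T$, which is the content of the second half of the statement and is what makes the theorem usable in contour shifts. Everything you build lives inside $\Gamma(s)\Phi_r(s)$, so at the end you must divide by $\Gamma(s)$, and $1/\Gamma(\sigma+i\tau)\asymp|\tau|^{1/2-\sigma}e^{\pi|\tau|/2}$. The leftover pieces you divide by $\Gamma(s)$ --- the tails $\int_1^\infty t^{s-1}\psi(t)^j\,dt$ created when you complete $\int_0^1$ to $\int_0^\infty$, and the $k\ge3$ integrals $\int_0^1t^{s-1-r+k}u(t)^k\,dt$ --- decay only polynomially in $\tau$: integration by parts leaves a boundary term of size $1/|\tau|$ at $t=1$, and the zero sums contribute roughly $\sum_{\rho_1,\dots,\rho_k}\prod_i|\Gamma(\rho_i)|\big(1+|\tau-\sum_i\gamma_i|\big)^{-1}\gg1/|\tau|$ from the low-lying zeros. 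Hence your $R(s)$ grows like $e^{\pi|\tau|/2}$ in the strip $r-3/2<\Re s<r$, and Phragm\'en--Lindel\"of cannot rescue this because you only control $R$ polynomially on the right edge of that region, not on the left. The paper's remainder, by contrast, is an absolutely convergent Dirichlet series plus the polynomially controlled corrections of Lemma~\ref{Lem:DPartial}, so no $\Gamma$-factor ever has to be divided out. To repair your argument you would need either to show that the $\int_1^\infty$ tails and the $k\ge3$ pieces conspire to decay like $e^{-\pi|\tau|/2}$ (they do not, term by term), or to avoid splitting $\Gamma(s)\Phi_r(s)$ into pieces with mismatched vertical decay.
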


The constant $1/10$ can be improved, however, since we believe that
$\Phi_2$ has $\Re\;s=1$ as natural boundary, we saw no point in doing so.

Our proof expresses the function $\Phi_r(s)$ using the circle method. 
This approach is the main novelty of this paper.

\par
\section{Proof of Theorem~\ref{thm:Rep}}

In this section we prove Theorem~\ref{thm:Rep} by computing the function
using the circle method. We use the standard notation.

Fix a large integer $x$, set $e(\alpha)=e^{2\pi i \alpha}$,
\begin{eqnarray*}
S(\alpha) & = & \sum_{n\leq x}\Lambda(n) e(\alpha n),\\
T(\alpha) & = & \sum_{n\leq x} e(\alpha n),\\
T_3(\alpha) & = & \sum_{|n|\leq x}(x-|n|)^2 e(n\alpha),\\
R(\alpha) & = & S(\alpha)-T(\alpha).
\end{eqnarray*}

\begin{lemm}
\label{Lem:ExpSum}
Under the Riemann hypothesis we have
\[
R(\alpha)\ll x^{1/2}\log^2 x + \alpha x^{3/2}\log^2 x. 
\]
\end{lemm}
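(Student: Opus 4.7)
The observation is that the trivial algebraic identity
\[
R(\alpha) = \sum_{n\leq x}\bigl(\Lambda(n)-1\bigr)e(\alpha n)
\]
converts the problem into an exponential sum with the arithmetic function $\Lambda(n)-1$, whose partial sums are exactly the error term in the prime number theorem. So the plan is simply: invoke the RH-conditional estimate for $\psi(y)-y$, and then convert it into a bound on the exponential sum by partial summation.

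\emph{Step 1 (Input from the explicit formula).} Under the Riemann hypothesis, the explicit formula for $\psi$ yields the classical bound $\psi(y)-y\ll y^{1/2}\log^2 y$ for $y\geq 2$. Setting
\[
F(y) = \sum_{n\leq y}\bigl(\Lambda(n)-1\bigr) = \psi(y) - \lfloor y\rfloor,
\]
we therefore have $F(y) \ll y^{1/2}\log^2 y$ uniformly in $y\geq 2$.

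\emph{Step 2 (Abel summation).} Apply partial summation with the smooth factor $g(y)=e(\alpha y)$, which has derivative $g'(y) = 2\pi i\alpha\, e(\alpha y)$. This gives
\[
R(\alpha) = F(x)\,e(\alpha x) \;-\; 2\pi i\alpha\int_{1}^{x} F(y)\, e(\alpha y)\, dy \;+\; O(1),
\]
where the $O(1)$ accounts for the contribution of small $n$.

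\emph{Step 3 (Termwise estimate).} Using $|e(\alpha y)|=1$ and the bound from Step~1 inside the integral,
\[
|R(\alpha)| \;\ll\; x^{1/2}\log^2 x \;+\; |\alpha|\int_{1}^{x} y^{1/2}\log^2 y\, dy \;\ll\; x^{1/2}\log^2 x + |\alpha|\, x^{3/2}\log^2 x,
\]
which is exactly the desired inequality. There is no real obstacle here: the only nontrivial input is the conditional bound $\psi(y)-y\ll y^{1/2}\log^2 y$, and the rest is a one-line application of partial summation, with the two terms in the statement corresponding precisely to the boundary and integral contributions, respectively.
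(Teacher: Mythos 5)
Your proposal is correct and follows essentially the same route as the paper: both reduce $R(\alpha)=\sum_{n\leq x}(\Lambda(n)-1)e(\alpha n)$ to the RH-equivalent bound $\psi(y)-y\ll y^{1/2}\log^2 y$ via partial summation, with the boundary term giving $x^{1/2}\log^2 x$ and the derivative (or difference) of $e(\alpha y)$ contributing the factor $\alpha$ in the second term. The only difference is cosmetic: you use the continuous (integral) form of Abel summation where the paper uses the discrete form with the increments $e(\alpha n+\alpha)-e(\alpha n)\ll\alpha$.
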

\begin{proof}
The Riemann hypothesis is equivalent to the estimate
$\Psi(x)=x+\mathcal{O}(x^{1/2}\log^2 x)$, hence, 
\begin{eqnarray*}
R(\alpha) & = & \sum_{n\leq x}(\Lambda(n)-1)e(\alpha n)\\
 & = & (\Psi(x)-x)e(\alpha x) - \sum_{n\leq x}(\Psi(n)-n)(e(\alpha
 n+\alpha)-e(\alpha n))\\
 & \ll & x^{1/2}\log^2 x + \alpha x^{3/2}\log^2 x,
\end{eqnarray*}
and our claim follows.
\end{proof}
The next statement is a consequence of partial summation.
\begin{lemm}
\label{Lem:DPartial}
Let $a_n$ be a sequence of complex numbers, set $A_n=\sum_{\nu\leq n}
a_n$, $d(s)=\sum_n a_n n^{-s}$, and $D(s)=\sum_n A_n n^{-s}$. Suppose
that $D(s)$ is absolutely convergent for $\Re\;s>\sigma_0$ and has
meromorphic continuation to $\Re\;s>\sigma_1$. Then $d(s)$ has
meromorphic continuation to $\Re\;s>\sigma_1-1$, and there exist
polynomials $Q_i$, $0\leq i\leq \sigma_0-\sigma_1$, such that
\[
d(s) = \sum_{i=0}^{\lfloor\sigma_0-\sigma_1\rfloor} Q_i(s) D(s+1+i) + R(s),
\]
where $R$ is holomorphic on $\Re\;s>\sigma_1-1$, and continuous on
$\Re\;s\geq\sigma_1-1$. 
\end{lemm}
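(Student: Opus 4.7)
The plan is to combine Abel summation with a Taylor expansion of $n^{-s}-(n+1)^{-s}$, trading differences of consecutive powers for shifts of the argument of $D$.

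First I would apply partial summation: with $A_0 := 0$ and $a_n = A_n - A_{n-1}$,
\[
d(s) = \sum_{n=1}^\infty A_n\bigl[n^{-s}-(n+1)^{-s}\bigr] \qquad (\Re s > \sigma_0),
\]
the boundary term $A_N N^{-s}$ vanishing in the limit because absolute convergence of $D$ on $\Re s > \sigma_0$ forces $A_n = o(n^{\sigma_0+\varepsilon})$ for every $\varepsilon > 0$.

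Next I would expand by Taylor's formula around $n$: for every $n \ge 1$,
\[
n^{-s}-(n+1)^{-s} = \sum_{k=1}^{K} q_k(s)\, n^{-s-k} + E_K(s,n),\qquad q_k(s) := -\binom{-s}{k},
\]
each $q_k$ a polynomial of degree $k$ in $s$, with the integral remainder obeying $|E_K(s,n)| \ll_s n^{-\Re s - K - 1}$ uniformly for $s$ in compacta. Setting $K := \lfloor\sigma_0-\sigma_1\rfloor+1$, substituting, and exchanging the finite sum over $k$ with the sum over $n$ (justified by Fubini, since each inner series is absolutely convergent for $\Re s > \sigma_0$) yields, first in that half-plane,
\[
d(s) = \sum_{i=0}^{\lfloor\sigma_0-\sigma_1\rfloor} Q_i(s)\,D(s+1+i) + R(s),\qquad Q_i := q_{i+1},
\]
where $R(s) := \sum_{n \ge 1} A_n E_K(s,n)$.

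It remains to show $R$ extends holomorphically to $\Re s > \sigma_1 - 1$ and continuously to its closure. The termwise bound gives $|R(s)| \ll_s \sum_{n} |A_n|\, n^{-\Re s - K - 1}$, and the crucial point is to invoke absolute convergence of $D$ itself rather than a pointwise bound on $A_n$: by hypothesis $\sum_n |A_n|\, n^{-\sigma'} < \infty$ for every $\sigma' > \sigma_0$. With the chosen $K$, the inequality $\Re s > \sigma_1 - 1$ guarantees $\Re s + K + 1 > \sigma_0$, so the remainder series converges locally uniformly on that half-plane, giving holomorphy of $R$ there and continuity up to the boundary. Meromorphic continuation of $d$ to $\Re s > \sigma_1 - 1$ then follows from the meromorphicity of each $D(s+1+i)$, which holds on $\Re s > \sigma_1 - 1 - i$.

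I expect the only mildly subtle point to be this final accounting: using the weaker pointwise consequence $|A_n| \ll n^{\sigma_0+\varepsilon}$ would leave a strip of width up to $1-\{\sigma_0-\sigma_1\}$ uncovered, whereas appealing to the absolute-convergence hypothesis on $D$ directly closes the gap and makes the floor exponent in the lemma the natural one.
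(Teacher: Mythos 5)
Your proposal is correct and takes essentially the same route as the paper: Abel summation followed by the expansion $n^{-s}-(n+1)^{-s}=\sum_{\nu\ge 1}\frac{(-1)^{\nu+1}s(s+1)\cdots(s+\nu-1)}{\nu!}\,n^{-s-\nu}$, truncated so that the tail, weighted by $A_n$, converges absolutely on $\Re s>\sigma_1-1$ and is absorbed into $R(s)$. Your bookkeeping of the truncation index $K=\lfloor\sigma_0-\sigma_1\rfloor+1$ and of the convergence of the remainder series is, if anything, slightly more careful than the paper's.
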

\begin{proof}
We have
\begin{eqnarray*}
d(s) & = & \sum_n A_n \big(n^{-s}-(n+1)^{-s}\big)\\
 & = & \sum_n A_n
\sum_{\nu=1}^N \frac{(-1)^{\nu+1} s(s+1)\dots(s+\nu-1)}{\nu!}
n^{-s-\nu} + R(s)\\
 & = & \sum_{\nu=1}^N \frac{(-1)^{\nu+1} s(s+1)\dots(s+\nu-1)}{\nu!}
 D(s+\nu) + R(s),
\end{eqnarray*}
where $R(s)$ is holomorphic on $\Re\;s>\sigma_0-N$. Choosing
$N>\sigma_0-\sigma_1+1$ our claim follows.
\end{proof}
We can now establish Theorem~\ref{thm:Rep}.
\begin{proof}[Proof of Theorem~\ref{thm:Rep}.] Define the sequence of functions
$A_r^k$ by $A_r^0(n)=G_r(n)$, and $A_r^{k+1}(n)=\sum_{\nu\leq n}
A_r^k(\nu)$.

We compute $A_r^3(x)$ using the circle method. 
We have
\begin{eqnarray*}
A_r^3(x) & = & \int\limits_0^1 S^r(\alpha) T_3(\alpha)d\alpha\\
 & = & \sum_{k=0}^r \binom{r}{k}\int\limits_0^1 T(\alpha)^{r-k}R^k(\alpha)
 T_3(\alpha)d\alpha\\
 & = & \sum_{k=0}^r \binom{r}{k} B_{r, k}(x),
\end{eqnarray*}
say. Our aim is to show that $B_{r, 0}(x), B_{r, 1}(x), B_{r, 2}(x)$
are quite regular and have main terms corresponding to the
Dirichlet-series explicitly mentioned in Theorem~\ref{thm:Rep}, and that
$B_{r, k}(x)$ for $k\geq 3$ is of order $\mathcal{O}(x^{r-1/10})$. We
collect the contribution of the coefficients $B_{r, k}(x)$ into a
Dirichlet-series, which converges uniformly in any half-plane of the
form $\Re\;s>r+1-1/10+\epsilon$, which can be absorbed into $R(s)$.
Once we have
shown these facts, Theorem~\ref{thm:Rep} follows. 

We first show that terms with $k\geq 3$ are negligible. Note
that $T_3(\alpha)\ll\min(x^3, \alpha^{-3})$. We split the integral
into the range $[-\beta, \beta]$ and $[\beta, 1-\beta]$. In the former
range, we use Lemma~\ref{Lem:ExpSum} to bound all occurring values of
$R$, whereas in the latter we use the estimate $\int_0^1
|R(\alpha)|^2\;d\alpha\ll x\log^2 x$. By symmetry it suffices to
consider the integral over $[0, 1/2]$, we begin with the case of small
$\alpha$. We have
\begin{eqnarray*}
\int\limits_0^\beta |T(\alpha)|^{r-k}|R(\alpha)|^k|T_3(\alpha)|
d\alpha & \ll & \int\limits_0^\beta\min(x^{r+3-k},
\alpha^{k-3-r})(x^{1/2}+\alpha x^{3/2})^kx^\epsilon\; d\alpha\\
 & \ll & x^{r+2-k/2+\epsilon}+\int\limits_{x^{-1}}^\beta
\alpha^{2k-2-r} x^{3k/2+\epsilon}\; d\alpha\\
 & \ll & x^{r+2-k/2+\epsilon} +
\begin{cases}
 \beta^{2k-1-r} x^{3k/2+\epsilon}, & 2k-1-r> 0\\
x^{r+2-k/2}, & 2k-1-r\leq0
\end{cases}
\end{eqnarray*}
Using $k\geq 3$ we see that in the second case the integral is bounded
by $x^{r+1/2}$, which is sufficient. In the first case we take
$\beta=x^{-1/2}$ and obtain that the integral is bounded by
$x^{(k+r+1)/2}$, since $k\leq r$, this is also of order $x^{r+1/2}$,
and therefore admissible.

For the remainder of the integral we use the $L^2$-estimate
$\int_0^1|R(\alpha)|^2d\alpha\ll x^{1+\epsilon}$ and the trivial bound
$|R(\alpha)|\ll x^{1+\epsilon}$ and obtain
\[
\int\limits_{x^{-1/2}}^{1/2} |T(\alpha)|^{r-k}|R(\alpha)|^k|T_3(\alpha)|
d\alpha\ll x^{1+\epsilon} \max_{x^{-1/2}\leq\alpha\leq 1/2}
\alpha^{k-3-r}x^{k-2} = x^{(k+r+1)/2},
\]
Using $k\leq r$ again we see that this is also
$\mathcal{O}(x^{r+1/2+\epsilon})$. Hence, we find that the
Dirichlet-series with coefficients $B_{r, k}(n)$ converge absolutely
for $\sigma > r+3/2$.

Next, we explicitly compute the contribution of the terms $k\leq
2$. We have $B_{r, 0}(x)= \int_0^1 T(\alpha)^r T_3(\alpha)\;d\alpha$,
that is, 
\begin{eqnarray*}
B_{r, 0}(x) & = & \sum_{n\leq x} (x-n)\#\{n_1+ \dots + n_r = n\}\\
 & = & \sum_{n\leq x} (x-n)^2 \binom{n+r-1}{r-1}\\
 & = & P_r(x)
\end{eqnarray*}
for some polynomial $P_r$ of degree $r+2$. Hence, the Dirichlet-series
with coefficients $B_{r, 0}$ can be expressed as a linear combination
of the functions $\zeta(s), \zeta(s-1), \ldots, \zeta(s-r-2)$. 

The corresponding computations for $B_{r, 1}$ and $B_{r, 2}$ are
simplified by observing that
\[
\int\limits_0^1 T(\alpha)^{r-1}R(\alpha) T_3(\alpha)\;d\alpha =  \int\limits_0^1
T(\alpha)^{r-1}S(\alpha) T_3(\alpha)\;d\alpha
 - \int\limits_0^1
T(\alpha)^r T_3(\alpha)\;d\alpha
\]
and
\begin{multline*}
\int\limits_0^1 T(\alpha)^{r-2} R(\alpha)^2 T_3(\alpha)\;d\alpha = \int\limits_0^1
T(\alpha)^{r-2}S(\alpha)^2 T_3(\alpha)\;d\alpha\\ - 2\int\limits_0^1
T(\alpha)^{r-1}S(\alpha) T_3(\alpha)\;d\alpha + \int\limits_0^1
T(\alpha)^{r} T_3(\alpha)\;d\alpha. 
\end{multline*}
To evaluate these integrals we transform them back into counting
problems. We have
\begin{eqnarray*}
\int\limits_0^1 T(\alpha)^r T_3(\alpha)\;d\alpha & = & \underset{0\leq
  n_i\leq x,\;|m|<x}{\sum_{n_1+\dots+n_r+m=0}}(x-|m|)^2\\
= \sum_{0\leq m\leq x}(x-m)^2\binom{m+r-1}{r-1} = P(x),
\end{eqnarray*}
where $P$ is a polynomial of degree $r+2$. Hence, the generating
function of $\int\limits_0^1 T(\alpha)^r T_2(\alpha)\;d\alpha$ is a
linear combination of $\zeta(s), \zeta(s-1), \ldots, \zeta(s-r-2)$.

Similarly,
\begin{eqnarray*}
\int\limits_0^1 T(\alpha)^{r-1}S(\alpha) T_3(\alpha)\;d\alpha & = & 
\underset{0\leq n_i\leq x,\;|m|<x}{\sum_{n_1+\dots+n_r+m=0}}\Lambda(m)(x-|m|)^2\\
 & = & \sum_{0\leq m\leq x}(x-m)\Lambda(m)\binom{m+r-1}{r-1}\\
 & = & \sum_{0\leq m\leq x} \Lambda(m) P_1(m) + x\sum_{0\leq m\leq x}
\Lambda(m) P_2(m),
\end{eqnarray*}
where $P_1$ is a polynomial of degree $r+2$, and $P_2$ a polynomial of
degree $r+1$. The generating function of $\Lambda(m) P_1(m)$ is a
linear combination of $\frac{\zeta'}{\zeta}(s),
\frac{\zeta'}{\zeta}(s-1), \ldots, \frac{\zeta'}{\zeta}(s-r-1)$,
applying partial summation we find that the generating function of
$\sum_{0\leq m\leq x} \Lambda(m) P_1(m)$ is a linear combination with
rational coefficients plus a remainder, which is holomorphic in the
half-plane $\Re\;s>0$, the same argument applies to the second sum.

Finally,
\[
\int\limits_0^1 T(\alpha)^{r-2}S(\alpha)^2 T_3(\alpha)\;d\alpha = 
\underset{0\leq n_i\leq x,\;|m|<x}{\sum_{n_1+\dots+n_{r-1}+m=0}}G_2(n_{r-1})(x-|m|)
\]
and as for the previous integral we find that the generating function
with coefficients $\int\limits_0^1 T(\alpha)^{r-2}S(\alpha)^2
T_3(\alpha)\;d\alpha$ is a linear combination of $\Phi_2(s)$, \ldots,
$\Phi_2(s-r-1)$ with rational coefficients, plus a function which is
holomorphic in the half-plane $\Re\;s>0$.

Combining this
observation with Lemma \ref{Lem:DPartial} we find that $\Phi_r(s)$ can
be written as a linear combination of the functions $\zeta(s)$, \ldots,
$\zeta(s-r+1)$, $\frac{\zeta'}{\zeta}(s)$, \ldots,
$\frac{\zeta'}{\zeta}(s-r+1)$, $\Phi_2(s)$, \ldots, $\Phi_2(s-r+2)$ with
rational coefficients plus a remainder $R(s)$ which is holomorphic in the
half-plane $\Re\;s>r-3/2$. But among these functions only
$\zeta(s-r+2)$, $\zeta(s-r+1)$, $\frac{\zeta'}{\zeta}(s-r+2)$,
$\frac{\zeta'}{\zeta}(s-r+1)$,$\Phi_2(s-r+3)$ and $\Phi_2(s-r+2)$ are not
holomorphic in the half-plane $\Re\;s>r-2$, hence, all but this six
functions can be subsumed under $R$. Moreover, since we work under the
Riemann hypothesis, $\frac{\zeta'}{\zeta}(s-r+2)$ and $\Phi_2(s-r+3)$
are holomorphic in 
$\Re\;s>r-3/2$ with the exception of a pole at $s=r-1$, hence, we
can replace these functions by $\zeta(s-r+2)$. Hence, the claim of the
theorem follows.
\end{proof}

\section{Proof of Theorem~\ref{thm:Boundary}}

Part (1) of the theorem follows from Theorem~\ref{thm:Rep} because in the half
plane $\Re(s)> r-1-1/10$  only $\Phi_2(s-r+2)$ has essential singularities.

We now indicate the proof of part (4) which is closely related 
to the one given under the Riemann
hypothesis by Egami and Matsumoto. The following serves as a
substitute for \cite[Lemma~4.1]{EgMat}
\begin{lemm}
let $D$ be the closure of the set $\{\rho_1+\rho_2:\zeta(\rho_i)=0,
\Re\rho_i>0\}$. Then $\C\setminus D$ is not connected, and, denoting
by $\mathcal{D}$ the component containing the half-plane $\Re s>2$, we have
\[
\{s:\Re s>3/2\}\subseteq\mathcal{D}\subseteq\{s:\Re s>1\}.
\]
\end{lemm}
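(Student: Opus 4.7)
The plan is to show that $D$ contains a dense subset of the vertical line $\{\Re s=1\}$ that serves as the separating barrier, and then to verify that—under the Riemann hypothesis implicit in this portion of the argument—$D$ is disjoint from the half-plane $\{\Re s>3/2\}$. The barrier is built via the functional equation of $\zeta$: if $\rho=\beta+i\gamma$ is a non-trivial zero, then so is $1-\bar\rho=(1-\beta)+i\gamma$. Consequently, for any two zeros $\rho_1=\beta+i\gamma_1$ and $\rho_2=(1-\beta)+i\gamma_2$, the sum $\rho_1+\rho_2=1+i(\gamma_1+\gamma_2)$ lies in the defining set of $D$. Under RH every $\beta$ equals $1/2$, so every pair qualifies, and passing also to complex conjugates (replacing each $\gamma_i$ by $-\gamma_i$) the realised imaginary parts sweep out $\{\gamma_1\pm\gamma_2:\gamma_i\in\Gamma\}$.

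I would then show that this sumset is dense in $\R$, forcing $\{\Re s=1\}\subseteq D$. By Riemann--von Mangoldt, $\#\{\gamma\in\Gamma:T\leq\gamma\leq T+1\}\sim(\log T)/(2\pi)\to\infty$, so consecutive ordinates become arbitrarily closely spaced at large heights. Given $t\in\R$ and $\varepsilon>0$, a pigeonhole using a sufficiently large $\gamma_k\in\Gamma$ then produces a $\gamma_j\in\Gamma$ with $|\gamma_j-\gamma_k-t|<\varepsilon$, yielding density. With $\{\Re s=1\}\subseteq D$ in hand, the disconnectedness of $\C\setminus D$ is immediate, and the component $\mathcal{D}$ containing $\{\Re s>2\}$ cannot cross this line, giving $\mathcal{D}\subseteq\{\Re s>1\}$. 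For the lower bound I invoke RH a second time: every $\rho_i$ has $\Re\rho_i=1/2$, so every element of the defining set of $D$ has real part exactly $1$; closures preserve real parts, hence $D\subseteq\{\Re s=1\}$ and in particular $D\cap\{\Re s>3/2\}=\emptyset$. The open half-plane $\{\Re s>3/2\}$ is therefore a connected subset of $\C\setminus D$ meeting $\mathcal{D}$ (e.g.\ on $\{\Re s>2\}$), hence entirely contained in $\mathcal{D}$.

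The main obstacle is the density claim: ruling out large gaps in the sumset $\Gamma+\Gamma$ is where all the arithmetic content resides, while the topological deductions are formal once density is in hand. A quantitative form of Riemann--von Mangoldt, or equivalently a careful pigeonhole at large height $T$, is needed so that the expected $\sim(\log T)/(2\pi)$ ordinates per unit interval beat the error term and produce the close approximation $\gamma_j-\gamma_k\approx t$ that the argument demands; the rest of the lemma then follows by purely point-set-topological reasoning.
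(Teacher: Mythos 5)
There are two genuine problems with your argument. The first is structural: you invoke the Riemann hypothesis at both ends (to place every sum $\rho_1+\rho_2$ exactly on the line $\Re s=1$, and again to conclude $D\subseteq\{s:\Re s=1\}$ for the inclusion $\{s:\Re s>3/2\}\subseteq\mathcal{D}$). But this lemma is the ingredient for part (4) of Theorem~2, which assumes only Conjecture~2, and it is explicitly offered as a substitute for the RH-conditional Lemma~4.1 of Egami--Matsumoto; an RH-conditional proof therefore misses the point of the lemma. The paper's proof is unconditional: given $t_0$ and $\epsilon>0$ it produces two zeros with real parts in $[1/2,1/2+\epsilon/2)$ whose ordinates sum to within $\epsilon$ of $t_0$, using Ingham's zero-density estimate $N(T,\sigma)\ll T^{3(1-\sigma)/(2-\sigma)}\log^5T$ to show that the heights near which every zero strays from the critical line form a set of density zero. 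This places points of the defining set in every square $1\le\Re s<1+\epsilon$, $t_0<\Im s<t_0+\epsilon$, so the closed set $D$ contains the whole line $\Re s=1$, which is the barrier you are after, without knowing where the zeros actually lie.

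The second problem persists even if you grant yourself RH: the density of the sumset of ordinates does not follow from the Riemann--von Mangoldt count. The asymptotic $\#\{\gamma\in\Gamma:T\le\gamma\le T+1\}\sim(\log T)/(2\pi)$ controls only the \emph{average} spacing; it is perfectly consistent with every ordinate lying within $1/10$ of an integer, in which case every element of $\Gamma\pm\Gamma$ would lie within $1/5$ of an integer and the sumset would fail to be dense. No pigeonhole on counts alone can rule this out, so the sentence ``consecutive ordinates become arbitrarily closely spaced at large heights'' is exactly the unproved step. What is needed, and what the paper cites, is Littlewood's theorem that the gaps between consecutive ordinates tend to zero --- genuine control of individual gaps, a substantially deeper fact than the counting asymptotic. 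The paper then combines Littlewood's theorem with Ingham's estimate to show the set $\mathcal{T}$ of ``good'' heights satisfies $|\mathcal{T}\cap[0,T]|>T/2$ for large $T$, whence $\mathcal{T}$ meets $t_0-\mathcal{T}$ and one can solve $t_1+t_2=t_0$ with both summands good. You should replace your pigeonhole sketch by this (or an equivalent) gap theorem, and rework the real-part bookkeeping so that no unproved hypothesis on the zeros enters.
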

\begin{proof}
Let $\epsilon>0$ and $t_0\in\R$ be given. We show that there are zeros
$\rho_1, \rho_2$ of $\zeta$ such that $\rho_1+\rho_2$ is within the
square $1\leq\Re s<1+\epsilon$, $t_0<\Im
s<t_0+\epsilon$, which implies the claim.
Let $N(T, \sigma)$ be the number of zeros $\rho$ of $\zeta$ with
$\Re\rho>\sigma$ and $0<\Im\rho<T$. Call a real number $t$ good, if
there is a zero $\rho$ of $\zeta$ with $\frac{1}{2}<\leq\Re
s<\frac{1}{2}+\epsilon/2$, $t_0<\Im s<t_0+\epsilon/2$, and let
$\mathcal{T}$ be the set of good numbers. We have
to show that there exists good numbers $t_1, t_2$ with
$t_1+t_2=t_0$. This in turn would follow if we show that
asymptotically almost all real numbers are good. To do so, we use the
estimate $N(T, \sigma)\ll T^{\frac{3(1-\sigma)}{2-\sigma}}\log^5 T$
due to Ingham and the fact that the distance between consecutive
abscissae of zeros tends to zero, proven by Littlewood. The second
statement shows that every sufficiently large real number $t$ is good,
unless there is a zero of $\zeta$ in the domain $\Re
s>\frac{1}{2}+\epsilon/2$, $t<\Im s<t+\epsilon/2$. Hence, we obtain
\[
|\mathcal{T}\cap[0, T]|\geq T-C(\epsilon)- N(T,\frac{1+\epsilon}{2}) \sim T,
\]
that is, for $T$ sufficiently large the measure of $\mathcal{T}\cap[0,
T]$ supersedes $T/2$, hence, we find real numbers $t_1,
t_2\in\mathcal{T}$ with $t_1+t_2=t_0$.
\end{proof}

It follows from \cite[Lemma~4.2]{EgMat}, that under
Conjecture~\ref{Con:IndepEff} every complex number of the form
$\rho_1+\rho_2$, $\zeta(\rho_1)=\zeta(\rho_2)=0$ is a singularity of
$\Phi_2$. The proof of the fact that
$\Phi_2$ is meromorphic in the half plane $\Re s>2\sigma_0$ runs parallel to
the proof of \cite[Theorem~2.1]{EgMat} and need not be repeated here.
 
Finally we prove parts (2) and (3). 
 
Let $\rho_1, \rho_2$ be zeros of $\zeta$. Our aim is to show that
either there are zeros $\rho_3, \rho_4$ 
with $\rho_1+\rho_2-\rho_3-\rho_4=0$ and $|\Im\rho_3|+|\Im\rho_4| \leq
5(|\Im\rho_1|+|\Im\rho_2|)$, or
$|\Phi_2(\rho_1+\rho_2+\eta)|\gg\frac{1}{\eta}$ 
for $\eta\searrow 0$. Our proof starts similar to the proof by Egami
and Matsumoto (confer \cite[section 4]{EgMat}).
\begin{lemm}
\label{Lem:Phi2Rep}
Put $M(s)=-\frac{\zeta'}{\zeta}(s)$. Then we have
\begin{eqnarray*}
\Phi_2(s) & = & \frac{M(s-1)}{s-1}-\sum_\rho\frac{\Gamma(s-\rho)\Gamma(\rho)}
{\Gamma(s)}M(s-\rho) - M(s)\log 2\pi\\
&&\qquad+\frac{1}{2\pi i}\int\limits_{-\epsilon-i\infty}^{-\epsilon+i\infty}
\frac{\Gamma(s-z)\Gamma(z)}{\Gamma(s)}M(s-z)M(z)dz\\
 & = & -\frac{1}{\Gamma(s)}\sum_{\rho, \rho'}\frac{\Gamma(s+1-\rho)\Gamma(\rho)}{(s-\rho-\rho')\rho'}
 + R(z),
\end{eqnarray*}
where $R$ is meromorphic in the whole complex plane
\end{lemm}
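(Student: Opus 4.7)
The strategy is to decouple the summation variables in the definition of $\Phi_2$ via a Mellin--Barnes integral. For $\Re s>2$ and $1<c<\Re s-1$ the classical identity
\[
(k_1+k_2)^{-s}=\frac{1}{2\pi i\,\Gamma(s)}\int_{c-i\infty}^{c+i\infty}\Gamma(z)\Gamma(s-z)\,k_1^{z-s}k_2^{-z}\,dz,
\]
summed term by term against $\Lambda(k_1)\Lambda(k_2)$ and using $M(w)=\sum_k\Lambda(k)k^{-w}$ for $\Re w>1$, gives
\[
\Phi_2(s)=\frac{1}{2\pi i\,\Gamma(s)}\int_{(c)}\Gamma(z)\Gamma(s-z)\,M(s-z)\,M(z)\,dz.
\]
This is the starting point for everything below.

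The first displayed equality of the lemma then follows by sliding the contour from $\Re z=c$ down to $\Re z=-\epsilon$. Inside this strip the integrand has simple poles of $M(z)$ at $z=1$ (residue $+1$) and at every non-trivial zero $\rho$ (residue $-1$), together with the pole of $\Gamma(z)$ at $z=0$. Using $\Gamma(1)=1$ and the classical value $M(0)=-\log 2\pi$, the three residue contributions become, after dividing by $\Gamma(s)$, exactly the terms $M(s-1)/(s-1)$, $-\sum_\rho\Gamma(s-\rho)\Gamma(\rho)M(s-\rho)/\Gamma(s)$ and $-M(s)\log 2\pi$; the integral along $\Re z=-\epsilon$ is the remaining term in the statement. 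Stirling's estimate for the vertical decay of $\Gamma(z)\Gamma(s-z)$, together with the standard polynomial bound on $\zeta'/\zeta$ off a thin neighbourhood of its poles, justifies both the contour shift and the termwise summation of residues.

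For the second equality I would feed the Hadamard-type partial-fraction expansion
\[
M(w)=\frac{1}{w-1}-\sum_{\rho'}\Bigl(\frac{1}{w-\rho'}+\frac{1}{\rho'}\Bigr)+h(w),
\]
in which $h$ is a function meromorphic on $\C$ with poles only at the trivial zeros, into the $\rho$-sum obtained above. The elementary identity $\tfrac{1}{(s-\rho)-\rho'}+\tfrac{1}{\rho'}=\tfrac{s-\rho}{(s-\rho-\rho')\rho'}$ together with $(s-\rho)\Gamma(s-\rho)=\Gamma(s+1-\rho)$ converts the regularised double-pole contribution into precisely the displayed double sum $-\frac{1}{\Gamma(s)}\sum_{\rho,\rho'}\frac{\Gamma(s+1-\rho)\Gamma(\rho)}{(s-\rho-\rho')\rho'}$. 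All remaining pieces — the pole-at-one summand, the $h(s-\rho)$ contributions, the constants $-M(s)\log 2\pi$ and $M(s-1)/(s-1)$, and the residual integral — are collected into $R(s)$.

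The principal obstacle is to certify that $R(s)$ really is meromorphic on all of $\C$ rather than only on some right half-plane. The residual integral has to be pushed further to the left, producing explicit meromorphic residues at $z=0,-1,-2,\dots$ coming from $\Gamma(z)$ and at the trivial zeros coming from $M(z)$; the resulting tail integrals must then be shown to define entire functions by Stirling-type decay at large negative $\Re z$. Equally delicate is that $\sum_\rho$ and $\sum_{\rho,\rho'}$ are only conditionally convergent, so all the rearrangements used above have to be carried out in the symmetric Hadamard pairing, and one must verify that these manipulations do not secretly re-introduce a natural boundary into $R(s)$ itself.
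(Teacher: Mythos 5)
Your derivation is correct and is in substance identical to the paper's proof, which simply cites formulas (2.2) and (4.2) of Egami--Matsumoto: those two formulas are precisely your Mellin--Barnes representation with the contour shifted past $z=1$, $z=\rho$, $z=0$, and the subsequent insertion of the Hadamard partial-fraction expansion of $M$, so you have reconstructed the cited argument rather than found a new one. One detail to recheck: with $M(w)=\frac{1}{w-1}-\sum_{\rho'}\bigl(\frac{1}{w-\rho'}+\frac{1}{\rho'}\bigr)+h(w)$ substituted into $-\sum_\rho\frac{\Gamma(s-\rho)\Gamma(\rho)}{\Gamma(s)}M(s-\rho)$, the two minus signs cancel and the double sum comes out as $+\frac{1}{\Gamma(s)}\sum_{\rho,\rho'}\frac{\Gamma(s+1-\rho)\Gamma(\rho)}{(s-\rho-\rho')\rho'}$, not with the minus sign you (and the lemma as printed) assert; this is almost certainly a typo in the statement and is harmless for the paper's application, which only uses $|\Phi_2|$, but your write-up should not claim the minus sign falls out of the computation.
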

\begin{proof}
This follows from \cite[(2.2)]{EgMat} and\cite[(4.2)]{EgMat}.
\end{proof}
Now suppose that $\rho_1=\frac{1}{2}+i\gamma_1$,
$\rho_2=\frac{1}{2}+i\gamma_2$ are zeros 
of $\zeta$.  We want to show that in a small neighourhood of
$1+i(\gamma_1+\gamma_2)$ the behaviour of $\Phi_2(s)$ is dominated by
the summand coming from $\rho_1, \rho_2$. To do so we estimate
the contribution of different ranges for $\rho, \rho'$ in different
ways.

Consider pairs $\rho, \rho'$ with
$|\rho+\rho'-\rho_1-\rho_2|>\frac{1}{4}$. For
$|s-\rho_1-\rho_2|<\frac{1}{8}$, the sum can be bounded as
\begin{eqnarray*}
\sum_{\rho,\rho'}\frac{\Gamma(s+1-\rho)\Gamma(\rho)}
{(s-\rho-\rho')\rho'} & = & \underset{|\rho_1+\rho_2-\rho-\rho'|>\rho'/2}
{\sum_{\rho, \rho'}}\frac{\Gamma(s+1-\rho)\Gamma(\rho)}
{(s-\rho-\rho')\rho'}\\
&&\qquad + \underset{|\rho_1+\rho_2-\rho-\rho'|\leq\rho'/2}
{\sum_{\rho, \rho'}}\frac{\Gamma(s+1-\rho)\Gamma(\rho)}
{(s-\rho-\rho')\rho'}\\
 & = & \sum\nolimits_1 + \sum\nolimits_2,
\end{eqnarray*}
say. We have
\[
\sum\nolimits_1 \ll \underset{|\rho_1+\rho_2-\rho-\rho'|>\rho'/2}
{\sum_{\rho, \rho'}}\frac{\Gamma(\rho)}
{\rho'^2} \ll \underset{|\rho_1+\rho_2-\rho-\rho'|>\rho'/2}
{\sum_{\rho, \rho'}}\frac{\Gamma(\rho)}
{\rho'^2}
\]
and
\[
\sum\nolimits_2\ll \underset{|\rho_1+\rho_2-\rho-\rho'|\leq\rho'/2}
{\sum_{\rho, \rho'}}\Gamma(s+1-\rho)\Gamma(\rho)\ll\sum_\rho\Gamma(\rho)N(2|\rho|),
\]
Since $N(T)\ll T\log T$, and $\Gamma(\sigma+it)\ll e^{-ct}$, we see
that this sum converges uniformly in the open ball
$B_{\frac{1}{8}}(\rho_1+\rho_2)$. 

Now consider pairs of zeros with $|\rho|+|\rho'|>5(|\rho_1|+|\rho_2|)$ and
$|\rho+\rho'-\rho_1-\rho_2|\leq 1/4$. For $s\in B_{\frac{1}{8}}(\rho_1+\rho_2)$
we have 
\[
s+1-\rho=1+\rho_1+\rho_2-\rho+\theta/8=1+\rho'+3\theta/8,
\]
where $\theta$ is a complex number of absolute value $\leq 1$, hence,
the sum taken 
over all $\rho, \rho'$ in this range is bounded by 
\begin{eqnarray*}
\sum_{\rho, \rho'} \left|\frac{\Gamma(\rho)
\max_{|s-\rho'|\leq 3/8}|\Gamma(1+s)|}{(s-\rho-\rho')\rho'}\right| & <
& \frac{1}{\eta}\underset{|\rho+\rho'-\rho_1-\rho_2|<1}
{\sum_{|\rho|+|\rho'|>5(|\rho_1|+|\rho_2|)}}|\rho'\Gamma(\rho)\Gamma(\rho')|\\
& < & \frac{1}{\eta}\left(\sum_{\rho>2(|\rho_1|+|\rho_2|)-1}
|\rho\Gamma(\rho)|\right)^2.
\end{eqnarray*}
To transform the sum over zeros into a sum over integers, we use the
following bound, which follows from a more precise result by
Backlund\cite{Back}.
\begin{lemm}
\label{Lem:Backlund}
We have $N(T+1)-N(T)<\log T$ for $T\geq 1$.
\end{lemm}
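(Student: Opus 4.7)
The plan is to deduce the lemma from Backlund's sharper bound
\[
N(T+1) - N(T) \leq \frac{1}{2\pi}\log T + c_0,
\]
valid for all $T\geq 2$ with an absolute constant $c_0$, which is the ``more precise result'' alluded to in the statement and which I would quote directly from \cite{Back}. For completeness I would recall that Backlund proves this by applying Jensen's formula to $\zeta$ on a disc centred at $s_0 = 2+iT$ of radius slightly larger than $3/2$: the lower bound $|\zeta(s_0)| \geq 1/\zeta(2)$ at the centre, together with a convexity estimate on the boundary circle, yields the inequality once the inner radius is chosen to enclose the unit segment of the critical line between heights $T$ and $T+1$.

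Granted Backlund's inequality, the claim $N(T+1)-N(T)<\log T$ splits into two ranges. For $T$ larger than $T_0 := \exp\!\bigl(c_0/(1-\tfrac{1}{2\pi})\bigr)$, the algebraic inequality $\tfrac{1}{2\pi}\log T + c_0 < \log T$ is immediate since $(1-\tfrac{1}{2\pi})\log T > c_0$. On the bounded residual range I would use the fact that the first zero of $\zeta$ has $\Im\rho_1 = 14.1347\ldots$, so $N(T+1)-N(T) = 0$ whenever $T+1 < \gamma_1$; the finitely many remaining values of $T$ can then be handled by bounding $N(T+1)-N(T)$ crudely by the total number of zeros of $\zeta$ in the strip $0 < \Im s \leq T_0+1$, a fixed integer that is easily dominated by $\log T$ once $T$ exceeds $14$.

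The only real obstacle is Backlund's underlying estimate with its optimal leading coefficient $\tfrac{1}{2\pi}$. A proof which uses only a coarse convexity bound of the form $\zeta(\sigma+it) \ll |t|^{1/2+\epsilon}$ across the whole critical strip produces a coefficient larger than $1$ in front of $\log T$, which would be insufficient. For the present lemma, however, one only needs the coefficient to be strictly smaller than $1$, so any effective computation establishing this is enough; I would therefore simply import Backlund's bound as a black box and carry out the elementary case split above.
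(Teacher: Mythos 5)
The paper itself offers no proof of this lemma beyond the phrase ``which follows from a more precise result by Backlund,'' so your plan --- import Backlund's explicit zero-counting theorem and do an elementary case split --- is exactly the intended route. The large-$T$ range and the range $T+1<14.1347\ldots$ are handled correctly. The problem is the middle range. You propose to bound $N(T+1)-N(T)$ for $14\lesssim T\leq T_0$ by the \emph{total} number of zeros up to height $T_0+1$ and claim this fixed integer is ``easily dominated by $\log T$ once $T$ exceeds $14$.'' That step fails numerically. Backlund's theorem is a two-sided estimate of the form $|N(T)-\frac{T}{2\pi}\log\frac{T}{2\pi e}-\frac78|\leq 0.137\log T+0.443\log\log T+4.35$, so the additive constant $c_0$ in your inequality $N(T+1)-N(T)\leq\frac{1}{2\pi}\log T+c_0$ is on the order of $9$ or $10$, which makes $T_0=\exp\bigl(c_0/(1-\frac{1}{2\pi})\bigr)$ at least in the tens of thousands; then $N(T_0+1)$ is likewise in the tens of thousands, while $\log T$ never exceeds $\log T_0\approx 11$ on the range in question. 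A bound by the cumulative count cannot work for any $T_0$ past the second zero at $21.02\ldots$, since already $N(22)=2>\log T$ would be needed for $T$ near $14$ --- it happens to hold there, but the logic ``fixed integer $\leq\log T$'' is not what saves you.

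The repair is to use Backlund's estimate itself, applied at both $T$ and $T+1$, throughout the middle range: this gives $N(T+1)-N(T)\leq(\frac{1}{2\pi}+0.274)\log(T+1)+0.886\log\log(T+1)+O(1)\approx 0.434\log T+\cdots$, and the inequality $0.434\log T+0.886\log\log T+c<\log T$ has a much more tractable (though still not tiny) crossover point; below that crossover one must verify the claim directly against a table of zeros, where the genuine content is that consecutive ordinates $\gamma_n$ never cluster densely enough to put $\lceil\log T\rceil$ of them in a unit interval at small height. In other words, the finite verification is unavoidable and is a check on the actual zeros, not a triviality about a ``fixed integer.'' With that step repaired your argument goes through and coincides in substance with the citation the paper relies on.
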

Using this bound together with the estimate $\Gamma(\sigma+it) <
e^{-\frac{\pi}{4}t}$ we obtain that the contribution of zeros of the
form under consideration is bounded by
\[
\frac{1}{\eta}\left(\sum_{n\geq 2(|\rho_1|+|\rho_2|)-2} n\log n
  e^{-\frac{\pi}{4}n}\right)^2 < \frac{270}{\eta}e^{-\frac{12}{5}(|\rho_1|+|\rho_2|)},
\]
where we used the fact that $|\rho_1|+|\rho_2|>28$.

On the other hand, the pair $\rho_1, \rho_2$ itself contributes
\[
\frac{1}{\eta}\left(\frac{\Gamma(\rho_1+1+\eta)\Gamma(\rho_2)}{\rho_1}
  + \frac{\Gamma(\rho_2+1+\eta)\Gamma(\rho_1)}{\rho_2}\right)\sim \frac{2}{\eta}\Gamma(\rho_1)\Gamma(\rho_2),
\]
hence, the contribution of these zeros cannot be canceled by the
contribution of  zeros satisfying $|\rho|+|\rho'|>5(|\rho_1|+|\rho_2|)$  
since
\[
2e^{-\frac{4}{5}(|\rho_1|+|\rho_2|)} > 270e^{-\frac{12}{5}(|\rho_1|+|\rho_2|)}.
\]
In fact the above inequality follows from $|\rho_1|+|\rho_2|>28$.

The arguments used till now were under the assumption of RH. Now we separate the
arguments for parts (2) and (3).

If in addition to the RH we
assume Conjecture 1, then the
finitely many pairs $\rho, \rho'$ different from $\rho_1, \rho_2$ which
we have not yet dealt with define a function meromorphic on $\C$ without poles
on the line $\Im s=\gamma_1+\gamma_2$.  In some neighborhood of
$\rho_1+\rho_2$ this function is bounded and the proof of part (2) is done.

We did not actually use the full strength of Conjecture 1 but only the
non-existence of linear relations $\gamma_1+\gamma_2=\gamma_3+\gamma_4\neq 0$
for $|\gamma_1|+|\gamma_2|\le 5(|\gamma_3|+|\gamma_4|)$. Even though this
condition seems as unreachable as Conjecture 1, for each fixed pair
$\gamma_1,\gamma_2$ it can be verified. Taking the minimal value 14.1347\ldots 
for $\gamma_1,\gamma_2$ there are only 39 zeros with imaginary part at most 142
it is easy to check that no other pair adds up to $\gamma_1+\gamma_2$.
hence $2\rho_1$ is a singularity of $\Phi_2$ and we are done. 
 
We now prove the Corollary. Set
\[
\Delta_r(x) = \sum_{n\leq x} G_r(n) - \frac{1}{r!}x^r - H_r(x).
\]
In view of Lemma~\ref{Lem:DPartial} and
Theorem~\ref{thm:Boundary}, part (3) the generating Dirichlet-series $D(s)$ of $\Delta_r$ has a singularity at
$2\rho_1+r-1$ . Moreover, 
\[
\lim_{\sigma\searrow 0} \sigma |D(\sigma+2\rho_1+r-1)|>0
\]
and if we have $\Delta_r(x)=o(x^{r-1})$, the last limit is zero. Thus our claim
follows.

\end{document}